\documentstyle[11pt,amssymb,amsthm,amsmath,multirow,verbatim]{article}
\topmargin -.58in
\oddsidemargin .5in
\textwidth 5.45in
\textheight 9.64 in

\newtheorem{thm}{Theorem}
\newtheorem{lm}[thm]{Lemma}
\newtheorem{prop}[thm]{Proposition}
\newtheorem{cor}[thm]{Corollary}
\newtheorem{deff}[thm]{Definition}

\newtheorem{ex}[thm]{Example}

\newtheorem{obs}[thm]{Observation}

\newcommand{\Comment}[1]{}

\begin{document}

\title{ 
 Constructing commutative semifields of square order}

\author{   S. M. Gagola III$^*$ and
         J. L. Hall$^\dag$ \\
             Department of Algebra$^{*\dag}$ \\
             Charles University \\
             Prague 8, Czech Republic \\
              gagola@karlin.mff.cuni.cz      \\
\\
Mathematical Sciences School$^\dag$\\
Queensland University of Technology\\
Brisbane, QLD Australia\\
j42.hall@qut.edu.au
 }

\date{}

\maketitle

\begin{abstract}
The projection construction has been used to construct semifields of odd characteristic using a field and a twisted semifield [Commutative semifields from projection mappings, \it Designs, Codes and Cryptography, \bf 61 \rm (2011), 187--196].  We  generalize  this idea to a projection construction using two twisted semifields to construct semifields of odd characteristic.  Planar functions and semifields have a strong connection  so this also constructs new planar functions.

\end{abstract}
\bf Keywords: \rm  Planar function, Dembowski-Ostrom polynomial, Trace, Commutative semifield, Projective plane. \\
\bf MSC: \rm 94A60; 12K10; 51E15; 05B25; 51A40.  \\ \vspace{-.1 cm}

\vspace{-.4cm}
\section{Introduction}
Semifields are algebraic structures satisfying most of the axioms of a field.  The classification of finite fields has been concluded over a century ago, however the classification of finite semifields is far from complete. 
Semifields have connections with geometry \cite{CH2008,Knuth1965}, and a  connection with planar functions means that semifields have applications in  classical cryptographic systems \cite{CDR98}, quantum cryptographic systems \cite{RS07}, wireless communication  \cite{DY2007},  and coding theory \cite{Horadam07}.

 Commutative semifields with odd characteristic are equivalent to those planar functions that are known as Dembowski-Ostrom Polynomials  (DO polynomials) \cite{CH2008}.
Planar functions belong to the larger class of highly nonlinear functions which are of use in the above mentioned applications  as well as being of theoretical interest \cite{CD04,CM97D}.
  A look at a recent list of planar functions  and semifields \cite{BH2011,ZhouPott2013} shows no obvious pattern. Computational searches can discover new planar functions and semifields \cite{JZH2012}, but algebraic work is required to significantly deepen our understanding.

The motivation behind this work is to generalize the projection construction for planar functions of Bierbrauer \cite{Bier2011}.  We use the trace map with Dembowski-Ostrom polynomials which has also been used to explore Dembowski-Ostrom polynomials that are permutations \cite{BCHO2011}.


Let $p$ be an odd prime.  Suppose that $S_g$ and $S_h$ are semifields of order $p^{2r}$ associated with planar functions $g$ and $h$ respectively.  Here $S_g$ and $S_h$ are isotopic to presemifields with multiplication operations 
\begin{alignat*}{5}
&&&x \circ y = g(x+y)-g(x)-g(y) && \\
&\textnormal{and \quad} &&x \triangle y = h(x+y)-h(x)-h(y) &&\textnormal{\quad respectively.}
\end{alignat*}
Now we consider a new operation
\begin{equation}\label{eqn:newstar}
x*y=x\circ y+\overline{(x\circ y) }+x\triangle y-\overline{(x\triangle y)}
\end{equation}
where $\bar{x}=x^{p^r}$.  
Note that 
$$ x*y=f(x+y)-f(x)-f(y) $$
where
\begin{equation}
 f(x)=g(x)+g(x)^{p^r} + h(x) - h(x)^{p^r}.   \label{eqn:newplanar}
\end{equation}
Thus, if $f(x)$ is also planar over ${\mathbb F}_{p^{2r}}$, then the multiplication operation in Equation~(\ref{eqn:newstar}) forms another presemifield, say the {\em projection construction}. 

Equation (\ref{eqn:newplanar})  has been used to construct a  planar function (and hence a semifield)  by Bierbrauer \cite{Bier2011}.  The contrast with the current work is that in \cite{Bier2011} 
the commutative semifield is associated with a planar function of the form (\ref{eqn:newplanar}) where $g(x)=x^2$.
Whereas, in this paper, we allow both  $g(x)$ and $h(x)$ to be a wider variety of polynomials.
We investigate the conditions on the polynomials $g(x)$ and $h(x)$ that result in the  function $f(x)$ being  planar over ${\mathbb{F}}_{p^{2r}}$.
Since we are primarily interested in constructing semifields,   we have restricted our search for planar functions of the shape of \mbox{Equation (\ref{eqn:newplanar})}, to DO polynomials.

We begin  by showing that if $g(x)$ and $h(x)$ are planar Dembowski-Ostrom monomials over ${\mathbb{F}}_{p^{2r}}$ with either $g(x)=x^2$ or $h(x)=x^2$, then 
$f(x)$, as defined in Equation~(\ref{eqn:newplanar}), is also planar over ${\mathbb{F}}_{p^{2r}}$.

In Section \ref{sec:known},  we show that if one of the semifields $S_g$ or $S_h$ is a field and the other is a commutative semifield, then the semifield obtained using the projection construction (as in Equation (\ref{eqn:newstar})) is already known.  The planar functions that these   correspond to  have either $g(x)=x^2$ or $h(x)=x^2$, where the other, namely $g(x)$ or $h(x)$, is another planar DO polynomial.  Several known families of semifields are shown to also fit the projection construction.

Section \ref{sec:nuclei} presents results on determining the middle and left nuclei of  commutative semifields constructed using this projection construction.

In Section \ref{sec:new}, some computationally derived semifields/planar functions are presented.  Computations of the sizes of the middle nuclei are used to show that these commutative semifields are new and not isotopic to previously known semifields.



\section{Preliminaries \label{sec:pre}}

\begin{deff}
A \it semifield \rm ${\mathbb{S}}$ is an algebraic structure with two binary operations, addition, $+$,  and multiplication, $*$, such that
\begin{itemize}
\item $({\mathbb{S}},+)$ is an abelian group,
\item $({\mathbb{S}}\diagdown \{0\},*)$ is a loop, and
\item multiplication is distributive on both the left and right.
\end{itemize}
\end{deff}

A \it presemifield \rm ${\mathbb{P}}$ is similar to a semifield, but with one of the axioms relaxed.  Namely, $({\mathbb{P}}\diagdown \{0\},*)$ does not necessarily have a multiplicative identity and is therefore a quasigroup.  A
presemifield is \it commutative \rm if its multiplication is commutative.   Semifields can  be non-commutative and non-associative but, by Wedderburn's Theorem \cite{WT}, if a semifield is finite, then associativity implies commutativity.   Given a commutative presemifield with multiplication $x * y$ it is easy to construct a commutative semifield.

\begin{lm}\cite{Albert43}
Every quasigroup $(Q,*)$ is isotopic to a loop.
For $a,b\in Q$, $(Q,*)$ is isotopic to $(Q,\circ)$ where
\[ x\circ y=(x)R_b^{-1}*(y)L_a^{-1} \]
\[ \Longleftrightarrow \:\:(x*b)\circ (a*y)=x*y. \qquad\quad {}^{}\]
Here $(a*b)\circ x = x = x\circ (a*b)$ for any $x\in Q$.
\end{lm}

Two presemifields ${\mathbb{S}}_1=({\mathbb{S}},+,*)$ and ${\mathbb{S}}_2=({\mathbb{S}},+,\circ)$ are called isotopic if there exist three linearized permutation polynomials $L_1,L_2,L_3$ over ${\mathbb{S}}$ such that $L_1(x) \circ L_2(y) = L_3(x * y)$ for any $x, y \in{\mathbb{S}}$.  

Let ${\mathbb{S}}$ be a semifield. The subsets
\begin{alignat*}{1}
N_{l}({\mathbb{S}}) &= \left\{a \in {\mathbb{S}}\:|\: a (xy) = (ax) y \textnormal{ for all } x,y\in{\mathbb{S}} \right\},   \\
N_{m}({\mathbb{S}}) &= \left\{a \in {\mathbb{S}}\:|\: x (ay) = (xa) y \textnormal{ for all } x,y\in{\mathbb{S}}\right\}, \\
N_{r}({\mathbb{S}}) &= \left\{a \in {\mathbb{S}}\:|\: x (ya) = (xy) a \textnormal{ for all } x,y\in{\mathbb{S}}\right\},
\end{alignat*}
are called the \it left, middle \rm and \it right nucleus \rm of ${\mathbb{S}}$, respectively. These sets are finite fields \cite{Knuth1965}.
The intersection $N({\mathbb{S}}) = N_{l}({\mathbb{S}})\cap N_{m}({\mathbb{S}})\cap N_{r}({\mathbb{S}})$ is called the \it nucleus \rm of ${\mathbb{S}}$. In the case where ${\mathbb{S}}$ is commutative it can easily be shown that $N({\mathbb{S}}) = N_{l}({\mathbb{S}}) =  N_{r}({\mathbb{S}}) \subseteq N_{m}({\mathbb{S}})$.  The size of the nuclei is invariant under isotopism, a fact which is used in Section \ref{sec:new} to show non isotopism of some semifields.

We will be using polynomials and on finite fields to explore semifields.
Let \[D_{f}(x,a)=f(x+a)-f(x)-f(a).\]

\begin{deff}\rm Let ${\mathbb{F}}_{p^r}$ be a field of characteristic $p$. A function $f : {\mathbb{F}}_{p^r} \rightarrow {\mathbb{F}}_{p^r}$ is  called a \it planar function \rm if for every  $a \in {\mathbb{F}}_{p^r}^*$ the 
map $x \mapsto D_{f}(x,a)$
is a bijection.
\end{deff}
Note that the definition of planar function does not require $p$ to be odd, however planar functions cannot exist on fields of even characteristic \cite{DO1968}, thus the focus of this work is on polynomials on fields of odd characteristic.

A polynomial $f(x)\in{\mathbb{F}}_{p^r}[x]$ is a \emph{Dembowski-Ostrom} polynomial 
if its reduced form  has the shape
\begin{equation}
f(x)=\sum_{i,j=0}^ka_{ij}x^{p^i+p^j}. \nonumber
\end{equation}
 Any polynomial $f(x)\in{\mathbb{F}}_{p^r}[x]$ may be \emph{reduced} modulo $x^{p^r}-x$, which yields a polynomial function of degree less than $p^r$ that induces the same function on ${\mathbb{F}}_{p^r}$.  A polynomial which is planar over ${\mathbb{F}}_{p^r}$ and also planar when restricted to the subfield ${\mathbb{F}}_p$ is planar over every subfield.


\begin{lm}\cite{CH2008}\label{lm:Df}
Let $f(x)$ be a DO polynomial on ${\mathbb{F}}_{p^r}$.  Then $f(x)$ is planar if and only if $D_f(x,y)\neq 0$ for all $x,y\in{\mathbb{F}}_{p^r}^*$.
\end{lm}

\begin{thm}\cite{CH2008}\label{thm:planarsemifield}
Let $p$ be an odd prime.  Each equivalence class of  DO planar polynomials on ${\mathbb{F}}_{p^r}$ constructs a unique isotopy class of  commutative semifields of order $p^r$.  Furthermore all commutative semifields of order $p^r$ can be constructed from a planar DO polynomial on ${\mathbb{F}}_{p^r}$.
\end{thm}
Theorem \ref{thm:planarsemifield} shows that  commutative semifields can be investigated by investigating planar DO polynomials.  Notions of equivalence of polynomials are of importance in applications \cite{CCZ1998}.   Planar polynomials $\Pi$ and $\Pi'$ with
\[\Pi'(x)=\left(L_1\circ\Pi\circ L_2\right)(x)+L_3(x)\]
are said to be \emph{EA-equivalent} if $L_1(x)$, $L_2(x)$, $L_3(x)$ are affine polynomials and $L_1(x)$, $L_2(x)$ are bijections \cite{KP2008}; and \emph{linear equivalent} if $L_1(x)$, $L_2(x)$, $L_3(x)$ are linearized polynomials and $L_1(x), L_2(x)$ are bijections.  CCZ equivalence \cite{CCZ1998} is of interest in cryptographic applications.  For planar  Dembowski-Ostrom polynomials CCZ, EA and linear equivalence are equivalent \cite{BH2011C}.


  It has been shown that linear equivalent planar functions construct isotopic semifields \cite{CH2008}.
The proof of Theorem~\ref{thm:planarsemifield} is constructive: let $x,y\in{\mathbb{F}}_q$ and  $*$ be multiplication on a presemifield of order $q$, then there exists a planar $DO$ polynomial $f$ such that
\[x*'y= D_f(x,y)\]
forms a presemifield isotopic to $({\mathbb{F}}_q,+,*)$.
So in  showing the non-equivalence of planar DO polynomials it is sufficient to show that their corresponding semifields are non-isotopic.


There are several known planar functions that fit the shape of Equation~(\ref{eqn:newplanar}) for which $g(x)$ and $h(x)$ are different (see Section \ref{sec:known}).
The following theorem establishes necessary and sufficient conditions on $g(x)$ and  $h(x)$ to form a planar function using \mbox{Equation (\ref{eqn:newplanar})}.

\begin{thm}\label{thm:newplanar}
Suppose $p$ is an odd prime.
Let $g(x)$ and $h(x)$ be functions on ${\mathbb{F}}_{p^{2r}}$ and let
$$ f(x)=g(x)+(g(x))^{p^r}+h(x)-(h(x))^{p^r}, $$
then the polynomial $f(x)$ is planar if and only if
for any 
$a,b\in {\mathbb{F}}_{p^{2r}}^*$
either
\[D_{g}(a,b) + \left(D_{g}(a,b)\right)^{p^r}\neq 0 \quad\textrm{or}\quad 
D_{h}(a,b)\not\in {\mathbb{F}}_{p^r}.\]
\end{thm}

\begin{proof}
Since ${\mathbb{F}}_{p^r}$ is a subfield of ${\mathbb{F}}_{p^{2r}}$,
the trace function
\[Tr(x)=Tr_{{\mathbb{F}}_{p^{2r}}/{\mathbb{F}}_{p^{r}}}(x)=x+x^{p^r}\]
can be used in
rewriting $f(x)$:
\[f(x)=Tr\left(g(x)\right)+h(x)-(h(x))^{p^r}.\]
By Lemma \ref{lm:Df}, the polynomial $f(x)$ is planar if and only if $D_{f}(a,b)\neq 0$ for all $a,b\in{\mathbb{F}}_{p^{2r}}^*$.

Now assume that $D_{f}(a,b)= 0$. Then
\begin{align}
&&Tr\left(D_{g}(a,b)\right)+D_{h}(a,b)-\left(D_{h}(a,b)\right)^{p^r}=\:& 0\label{eqn:=01}\\
 &\Longrightarrow & \Big(Tr\left(D_{g}(a,b)\right)+D_{h}(a,b)-\left(D_{h}(a,b)\right)^{p^r}\Big)^{p^r}=\:& 0 \qquad\qquad\\
&\Longrightarrow & Tr\left(D_{g}(a,b)\right)+\left(D_{h}(a,b)\right)^{p^r}-D_{h}(a,b) =\:& 0.\label{eqn:=02}
\end{align}
By adding Equations (\ref{eqn:=01}) and (\ref{eqn:=02}) it follows that $D_{f}(a,b)= 0$ if and only if
\begin{align}
Tr\left(D_{g}(a,b)\right) & =0 \quad\quad \mbox{and}\label{eqn:05}\\
D_{h}(a,b)-\left(D_{h}(a,b)\right)^{p^r} & =0. \label{eqn:insubfield}
\end{align}
Thus, $f(x)$ is planar if and only if, for any $a,b\in{\mathbb{F}}_{p^{2r}}^*$, either Equation~(\ref{eqn:05}) or Equation~(\ref{eqn:insubfield}) does not hold.
\end{proof}




\section{\!Known planar functions that fit the projection construction \label{sec:known}}
\vspace{-.2cm}
The LMPTB functions \cite{Bier2011}  fit the shape of the projection construction; they are easily recognizable as being of the form of \mbox{Equation (\ref{eqn:newplanar})}.
\begin{thm}\cite[Thm 1]{Bier2011}\label{thm:Bier}
Let $g(x)=\frac{1}{2}x^2$ and
\vspace{-.1cm}
\begin{equation}
h(x)=\frac{1}{2}\sum^{k}_{i=0}(-1)^ix^{(1+p^2)p^{2i}}+\frac{1}{2}\sum_{j=0}^{k-1}(-1)^{k+j}x^{(1+p^2)p^{2j+1}}, \label{eqn:LMPT}
\end{equation}
\vspace{-.1cm
}then the function $f(x)$ as defined in \mbox{Equation (\ref{eqn:newplanar})} is a planar function on ${\mathbb{F}}_{p^{2(2k+1)}}$ for $k>0$.
\end{thm}
The LMPTB semifields are an example of the projection construction as described in  \cite{Bier2011}.  The projection construction can be used to construct a new semifield from
two or more semifields.  The polynomials that are used to construct a planar polynomial in Equation (\ref{eqn:newplanar}) are not always planar polynomials (See Example \ref{e:exam}).  Hence, the semifields that are derived from the planar polynomials of the shape of Equation (\ref{eqn:newplanar}) may not be contructable from other semifields.

The LMPTB semifields are isotopic to the Budaghyan-Helleseth semifields \cite{Marino}.  Under certain conditions \mbox{Equation (\ref{eqn:newplanar})} contains generalized Budaghyan-Helleseth polynomials with $g(x)=x^2$, and under other conditions,  contains planar functions with $h(x)=x^2$ which 
have corresponding semifields that are isotopic to commutative semifields constructed by Zhou  \& Pott \cite{ZhouPott2013}.

\begin{thm}\label{thm:x2}
Let $g(x)$ and $h(x)$ be planar Dembowski-Ostrom monomials over ${\mathbb{F}}_{p^{2r}}$.  If either $g(x)=x^2$ or $h(x)=x^2$, then
\[ f(x)=g(x)+g(x)^{p^r} + h(x) - h(x)^{p^r} \]
is planar over ${\mathbb{F}}_{p^{2r}}$.
\end{thm}

The following lemma 
will be used to prove Theorem \ref{thm:x2}.
\begin{lm}\cite[Thm 3.3]{CM97D} \label{lem:equiv}
Let $h(x)=x^{p^i+p^k}\in{\mathbb{F}}_{p^{n}}[x]$ with $0\leq i< k$, then the following are equivalent:
\begin{alignat*}{3}
&(i) & & h(x) \textrm{ is planar;} \\
&(ii) &&D_h(x,a)\neq 0 {\textrm{ for all }} x,a\in {\mathbb{F}}_{p^{n}}^*; \\
&(iii)\quad &&\left|\langle b^{p^k-p^i}\rangle\right| {\textrm{ is odd for any }} b\in {\mathbb{F}}_{p^{n}}^*; \\
&(iv) & & n/gcd(n,i-k) \textrm{ is odd}.
\end{alignat*}
\end{lm}

\begin{proof}[Proof of Theorem \ref{thm:x2}]

\rm Assume that $f(x)$ is not planar. Then there exist elements $x,a\in {\mathbb{F}}_{p^{2r}}^*$ such that $D_{f}(x,a)= 0$. Thus, from Theorem~\ref{thm:newplanar}, there exist $x,a\in{\mathbb{F}}_{p^{2r}}^*$ such that  $Tr\left(D_{g}(x,a)\right)=0$ and $D_h(x,a)\in {\mathbb{F}}_{p^r}$. Thus,
\begin{equation} \label{eqn:trg}
D_{g}(x,a)\left(1+D_{g}(x,a)^{p^r-1}\right)=D_{g}(x,a)+D_{g}(x,a)^{p^r}=0.
\end{equation}
By Lemma \ref{lm:Df},  $D_{g}(x,a)\neq 0$, hence  Equation (\ref{eqn:trg}) implies that $D_{g}(x,a)^{p^r-1}=-1$.

Let $G$ be the unique subgroup of ${\mathbb{F}}_{p^{2r}}$ 
such that $|G|=2\left|{\mathbb{F}}_{p^r}^*\right|=2\left(p^r-1\right)$.  Note that
\[ G=\left\{x\in{\mathbb{F}}_{p^{2r}}\:\left|\: x^{p^r-1}=\pm 1\right.\right\}.\]
Since $D_{g}(x,a)^{p^r-1}=-1$, $D_{g}(x,a)\in G\diagdown {\mathbb{F}}_{p^r}^*$.

Now let  $c=xa$ 
and note that
\begin{alignat}{1}
x^{p^i}a^{p^k}+x^{p^k}a^{p^i} &= x^{p^i}a^{p^i}\left(a^{p^{k}-p^i}+x^{p^{k}-p^i}\right), \nonumber \\
   &= c^{p^i}\left(c^{p^{k}-p^i}x^{p^{i}-p^k}+x^{p^{k}-p^i}\right). \label{e:relation}
\end{alignat}

\noindent\it Case 1. \rm Suppose $g(x)=x^{p^i+p^k}$ and $h(x)=x^2$.  Since $D_{g}(x,a)=x^{p^i}a^{p^k}+x^{p^k}a^{p^i}\in G\diagdown {\mathbb{F}}_{p^r}^*$ and $D_{h}(x,a)=2ax=2c\in {\mathbb{F}}_{p^r}$, by Equation (\ref{e:relation}),  $c^{p^{k}-p^i}x^{p^{i}-p^k}+x^{p^{k}-p^i}$ is contained in $G\diagdown {\mathbb{F}}_{p^r}^*$.

\noindent\it Case 2. \rm Suppose $g(x)=x^2$ and $h(x)=x^{p^i+p^k}$.  Since $D_{g}(x,a)=2ax=2c\in G\diagdown {\mathbb{F}}_{p^r}^*$ and $D_{h}(x,a)= x^{p^i}a^{p^k}+x^{p^k}a^{p^i}\in {\mathbb{F}}_{p^r}$, by Equation (\ref{e:relation}),  $c^{p^{k}-p^i}x^{p^{i}-p^k}+x^{p^{k}-p^i} \in G\diagdown {\mathbb{F}}_{p^r}^*$.

Therefore, in either case the sum $(c^{p^{k}-p^i}x^{p^{i}-p^k}+x^{p^{k}-p^i})$ is contained in $G\diagdown {\mathbb{F}}_{p^r}^*$.  By Lemma \ref{lem:equiv}, both $c^{p^{k}-p^i}$ and $x^{p^{k}-p^i}$ are of odd order.
Since ${\mathbb{F}}_{p^r}^* \leq G$ with $|G|=2|{\mathbb{F}}_{p^r}^*|$, we know that $c^{p^{k}-p^i},x^{p^{k}-p^i} \in {\mathbb{F}}_{p^r}^*$. Therefore, $(c^{p^{k}-p^i}x^{p^{i}-p^k}+x^{p^{k}-p^i})\in {\mathbb{F}}_{p^r}^*$
forming a contradiction.  Hence, there do not exist $x,a\in {\mathbb{F}}_{p^{2r}}^*$ such that \mbox{$D_{f}(x,a)=0$} and therefore, $f(x)$ is a planar function.
\end{proof}

We now show that all of the polynomials described in  Theorem \ref{thm:x2} are known.

\begin{thm}\label{thm:Pott}
Let $h(x)=x^2\in{\mathbb{F}}_{p^{2r}}[x]$, $g(x)=x^{p^k+1}$ be a planar function on ${\mathbb{F}}_{p^{2r}}$,  and $f(x)$ be of the form (\ref{eqn:newplanar}), then the semifield associated with $f(x)$ is isotopic to the Zhou-Pott commutative semifield, ${\mathbb S}_{k,id}$  \cite{ZhouPott2013}.
\end{thm}
\begin{proof}
For $f(x)=x^{p^k+1}+x^{(p^k+1)p^r}+x^2-x^{2p^r}\in {\mathbb{F}}_{p^{2r}}[x]$, the binary operation on the corresponding presemifield is
\begin{alignat*}{1}
x*y 
    &= x^{p^k}y+xy^{p^k}+\left(x^{p^k}y+xy^{p^k}\right)^{p^r}+2xy-2x^{p^r}y^{p^r}.
\end{alignat*}

Let $\omega \in {\mathbb{F}}_{p^{2r}}^*$ such that $\omega^{p^r}=-\omega$ and denote any $x\in {\mathbb{F}}_{p^{2r}}$ by $x=x_1+x_2\omega$ for $x_1,x_2\in {\mathbb{F}}_{p^{r}}$. Then, since $x^{p^r}=(x_1+x_2\omega)^{p^r}=x_1-x_2\omega$,
\begin{alignat*}{1}
x*y &= (x_1+x_2\omega)*(y_1+y_2\omega), \\
    &= 2x_1^{p^k}y_1+2x_1y_1^{p^k}+2(x_2\omega)^{p^k}(y_2\omega)+2(x_2\omega)(y_2\omega)^{p^k} \\
    &\quad+4x_1(y_2\omega)+4(x_2\omega)y_1.
\end{alignat*}

By \cite[Thm 1]{ZhouPott2013}, the Zhou-Pott presemifield ${\mathbb{P}}_{k,\sigma}=({\mathbb{F}}_{p^{2r}},+,\circ)$ has multiplication
\begin{alignat}{1}\label{eqn:12}
x\circ y &= (x_1+x_2\omega)\circ (y_1+y_2\omega) ,\nonumber\\
    &= x_1^{p^k}y_1+x_1y_1^{p^k}+\alpha \left(x_2^{p^k}y_2+x_2y_2^{p^k}\right)^{\sigma}+(x_1y_2+x_2y_1)\omega ,
\end{alignat}
where $\alpha$ is a non-square element in ${\mathbb{F}}_{p^r}$ and $\sigma$ is a field automorphism of ${\mathbb{F}}_{p^r}$.
Let $\alpha=\frac{\omega^{p^k+1}}{4}$. Since $\frac{2r}{{\textnormal{gcd}}(2r,k)}$ is odd, $k$ is even, and therefore, $p^k+1$ is not divisible by four.  Since $\omega^{p^r}=-\omega$ and $\frac{p^k+1}{2}$ is odd, $\omega^{\frac{p^k+1}{2}}\notin {\mathbb{F}}_{p^{r}}$.  Hence, $\omega^{p^k+1}$ is a non-square element of ${\mathbb{F}}_{p^{r}}$, hence $\alpha$ is non-square.
  Let $\sigma$ be the identity map, then Equation (\ref{eqn:12}) becomes
\begin{alignat*}{1}
(x_1+x_2\omega)\circ (y_1+y_2\omega) &= x_1^{p^k}y_1+x_1y_1^{p^k}+ {\textstyle{\frac{1}{4}}}(x_2\omega)^{p^k}(y_2\omega)+{\textstyle{\frac{1}{4}}}(x_2\omega)(y_2\omega)^{p^k}  \\
    &\quad+x_1(y_2\omega)+(x_2\omega)y_1.
\end{alignat*}

Let $L_1$ and $L_2$ be linearized permutation polynomials over ${\mathbb{F}}_{p^{2r}}$ where $L_1(x)=3x-x^{p^r}$ and $L_2(x)={\textstyle{\frac{3}{2}}}x-{\textstyle{\frac{1}{2}}}x^{p^r}$. To show that $L_1$ is a permutation polynomial it is enough to show that the only solution to $3x-x^{p^r}=0$, is $x=0$.  If $3x=x^{p^r}$, then $x^{p^r-1}=\left(x^{p^r-1}\right)^{p^r}$, and thus, $x^{p^r-1}\in{\mathbb{F}}_{p^{r}}$.  Since $x^{p^r-1},x^{p^r+1}\in{\mathbb{F}}_{p^{r}}$, then $x^2$ is also contained in ${\mathbb{F}}_{p^{r}}$.  By squaring both sides of
$3x=x^{p^r}$, it follows that $9x^2=x^2$. Hence, $8x^2=0$ and $x=0$.\

Since
\begin{alignat*}{1}
L_1(x_1+x_2\omega) &\circ L_2(y_1+y_2\omega) = (2x_1+4x_2\omega)\circ(y_1+2y_2\omega) ,\\
    &= 2x_1^{p^k}y_1+2x_1y_1^{p^k}+ {\textstyle{\frac{8}{4}}}(x_2\omega)^{p^k}(y_2\omega)+{\textstyle{\frac{8}{4}}}(x_2\omega)(y_2\omega)^{p^k} \\
    &\quad+4x_1(y_2\omega)+4(x_2\omega)y_1 , \\
    &=  (x_1+x_2\omega)*(y_1+y_2\omega),
\end{alignat*}
the presemifield associated with $f(x)\!=\!x^{p^k+1}\!+x^{(p^k+1)p^r}\!+x^2-x^{2p^r}$ is \mbox{isotopic to ${\mathbb{S}}_{k,id}$}.
\end{proof}

The generalized Budaghyan-Helleseth functions  require some rearranging to show they can be constructed using \mbox{Equation (\ref{eqn:newplanar})}.
From \cite[Thm 3]{Bier2011},
\begin{equation}\label{eqn:GBH}
B(x)=x^{p^r+1}+\omega\beta x^{p^s+1}+\omega\beta^{p^r}x^{(p^s+1)p^r}
\end{equation}
where $Tr(\omega)=0$, $x^{p^s}\neq -x$ for all $x\in{\mathbb{F}}_{p^{2r}}^*$ and $\beta^{p^r-1}$ is not in the subgroup of order $(p^r+1)/{\textnormal{gcd}}(p^r+1,p^s+1)$.  Their corresponding presemifields are usually denoted by $BH(p,m,s,\beta)$.

\begin{obs}
Note that the generalized Budaghyan-Helleseth functions can be written as
\[B(x)=x^{p^r+1}+\omega\beta x^{p^s+1}-\left(\omega\beta x^{(p^s+1)}\right)^{p^r}\]
since $Tr(\omega)=0$.  Thus, $B(x)+B(x)^{p^r}=2x^{p^r+1}$.
\end{obs}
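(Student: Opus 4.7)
The plan is to verify the two claims in the observation directly from the definition of $B(x)$ and the hypothesis $Tr(\omega)=0$. Both are elementary manipulations, so the proposal is short.

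First, I would rewrite the third summand of $B(x)$ in~(\ref{eqn:GBH}). The condition $Tr(\omega)=Tr_{\mathbb{F}_{p^{2r}}/\mathbb{F}_{p^r}}(\omega)=\omega+\omega^{p^r}=0$ gives $\omega^{p^r}=-\omega$. Computing
\[
\left(\omega\beta x^{p^s+1}\right)^{p^r}=\omega^{p^r}\beta^{p^r}x^{(p^s+1)p^r}=-\omega\beta^{p^r}x^{(p^s+1)p^r},
\]
so $\omega\beta^{p^r}x^{(p^s+1)p^r}=-\left(\omega\beta x^{p^s+1}\right)^{p^r}$. Substituting this into~(\ref{eqn:GBH}) yields the asserted form of $B(x)$.

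Second, I would compute $B(x)+B(x)^{p^r}$ from this rewritten expression. Applying the Frobenius $x\mapsto x^{p^r}$ to each term and using that $x^{p^{2r}}=x$ on $\mathbb{F}_{p^{2r}}$ (so $(x^{p^r+1})^{p^r}=x^{1+p^r}$ and $(\omega\beta x^{p^s+1})^{p^{2r}}=\omega\beta x^{p^s+1}$),
\[
B(x)^{p^r}=x^{p^r+1}+\left(\omega\beta x^{p^s+1}\right)^{p^r}-\omega\beta x^{p^s+1}.
\]
Adding this to $B(x)$ causes the two middle pairs to cancel, leaving $B(x)+B(x)^{p^r}=2x^{p^r+1}$, as claimed.

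There is no real obstacle here; the whole observation is a two-line consequence of $\omega^{p^r}=-\omega$ together with the fact that $p^r$-th power is an involution on $\mathbb{F}_{p^{2r}}$. The only point to mention explicitly is that every exponent needs to be reduced modulo $p^{2r}-1$ before the cancellation is visible.
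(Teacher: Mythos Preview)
Your proposal is correct and matches the paper's reasoning exactly: the paper gives no separate proof of this observation beyond the clause ``since $Tr(\omega)=0$'', so your argument simply spells out the intended one-line justification ($\omega^{p^r}=-\omega$, hence the third summand equals $-(\omega\beta x^{p^s+1})^{p^r}$, and then the $p^r$-power involution makes the non-$x^{p^r+1}$ terms cancel in $B(x)+B(x)^{p^r}$).
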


\begin{thm}\label{thm:BHfunc}
Let $g(x)=x^2\in{\mathbb{F}}_{p^{2r}}[x]$, $h(x)=x^{p^k+p^i}$ be a planar function on ${\mathbb{F}}_{p^{2r}}$ and $f(x)$ be of the form (\ref{eqn:newplanar}). If either $r$ is even or $p\equiv 1$ $($mod $4)$, then $f(x)$ is equivalent to a generalized Budaghyan-Helleseth function.
\end{thm}
\begin{proof}
Note that since either $r$ is even or $p\equiv 1$ $($mod $4)$, there exists an element $j \in {\mathbb{F}}_{p^r}$ such that $j^2=-1$.  Also note that since $k-i$ is even, $j^{p^k+p^i}=-1$ and $j^{p^k}=j^{p^i}$. Let $L_1(x)=c_0x+jc_0^{p^r}x^{p^r}$  with $c_0\in {\mathbb{F}}_{p^{2r}}^*$. Then
\[ f(L_1(x))=4jc_0^{p^r+1}x^{p^r+1}+2c_0^{p^k+p^i}x^{p^k+p^i}-\left(2c_0^{p^k+p^i}x^{p^k+p^i}\right)^{p^r}\!\!\!. \]

\noindent\it Case 1. \rm Suppose $i=0$ and $L_2(x)=\frac{1}{4jc_0^{p^r+1}}x$.
Then \[ L_2(f(L_1(x)))=x^{p^r+1}+\textstyle{\frac{c_0^{p^k-p^r}}{2j}x^{p^k+1}-\left(\frac{c_0^{p^k-p^r}}{2j}x^{p^k+1}\right)^{p^r}}\!\!.\]

\noindent\it Case 2. \rm Suppose $i=r$ and $L_2(x)=\textstyle{\frac{1}{4jc_0^{p^r+1}}x^{p^r}}\!\!\!.$
Then \[ \textstyle{L_2(f(L_1(x)))=x^{p^r+1}+\frac{c_0^{p^{k+r}-p^r}}{2j}x^{p^{k+r}+1}-\left(\frac{c_0^{p^{k+r}-p^r}}{2j}x^{p^{k+r}+1}\right)^{p^r}\!\!\!.} \]

\noindent\it Case 3. \rm Now suppose that $i\notin\{0,r\}$ and $L_2(x)=\frac{1}{8jc_0^{p^r+1}}x+\frac{1}{8jc_0^{p^r+1}}x^{p^r}+d_{2r-i}x^{p^{2r-i}}-d_{2r-i}x^{p^{r-i}}$ where $d_{2r-i}\in {\mathbb{F}}_{p^r}$.
Then \[ L_2(f(L_1(x)))=x^{p^r+1}+4c_0^{p^{k-i}+1}d_{2r-i}x^{p^{k-i}+1}-\left(4c_0^{p^{k-i}+1}d_{2r-i}x^{p^{k-i}+1}\right)^{p^r}\!\!\!.\]
\end{proof}
Note that if $r$ is odd and $p\equiv 1$ $($mod $4)$, then the Budaghyan-Helleseth semifields  belong to the class of semifields ${\mathbb S}_k$ as defined in  Theorem~\ref{thm:Pott} \cite{ZhouPott2013}.

\begin{thm}\label{thm:Pott2}
Let $g(x)=x^2\in{\mathbb{F}}_{p^{2r}}[x]$, $h(x)=x^{p^k+1}$ a planar function on ${\mathbb{F}}_{p^{2r}}$ and $f(x)$ be of the form (\ref{eqn:newplanar}). If $p\equiv 3$ $($mod $4)$ and $r$ is odd, then the semifield associated with $f(x)$ is isotopic to a Zhou-Pott semifield, ${\mathbb{S}}_{k,id}$.
\end{thm}

\begin{proof}
Let $\beta\in {\mathbb{F}}_{p^{2r}}^*$ such that $\beta^4=-1$.  Since $k$ is even and $p\equiv 3$ (mod 4), $p^k+1\equiv$ 2 (mod 8). Thus, $\beta^{p^k+1}=\beta^2$.  Furthermore, since 
$\beta^2$ has order four, $\left(\beta^2\right)^{p^r}=-\beta^2$. Therefore,
\begin{alignat*}{1}
&(\beta x)^2+(\beta x)^{2p^r}+(\beta x)^{p^k+1}-(\beta x)^{(p^k+1)p^r} \\
=\: &\beta^2 x^2+(\beta^2)^{p^r} x^{p^r}+\beta^2 x^{p^k+1}-(\beta^2)^{p^r} x^{(p^k+1)p^r} \\
=\: &\beta^2 x^2-\beta^2 x^{p^r}+\beta^2 x^{p^k+1}+\beta^2 x^{(p^k+1)p^r} \\
=\: &\beta^2 \left(x^{p^k+1}+x^{(p^k+1)p^r}+x^2-x^{p^r}\right)
\end{alignat*}
and $f(x)$ is equivalent to $x^{p^k+1}+x^{(p^k+1)p^r}+x^2-x^{2p^r}$.  Hence, by Theorem \ref{thm:Pott}, the semifield corresponding to $f(x)$ is 
isotopic to a Zhou-Pott semifield, ${\mathbb{S}}_{k,id}$.
\end{proof}

Theorems \ref{thm:Pott}, \ref{thm:BHfunc}, and \ref{thm:Pott2}, show that all of the planar functions described by Theorem \ref{thm:x2} 
have semifields that are isotopic to previously known semifields.





\section{Nuclei of Semifields \label{sec:nuclei}}
In this section we explore some results on the nuclei of semifields which can be constructed using the projection construction of Equation (\ref{eqn:newstar}).

\subsection{Middle Nuclei}

The following lemma is explicit but important.
\begin{lm}\label{r:equiv to x2}
Let $f(x)$ be a planar function over a finite field ${\mathbb{F}}$. If the middle nucleus of the commutative semifield associated with $f$ is ${\mathbb{F}}$, then the semifield associated with $f$ is ${\mathbb{F}}$, and hence,  $f(x)$ is EA-equivalent to $x^2\in {\mathbb{F}}[x]$.
\end{lm}

In order to determine the middle nuclei of semifields from the projection construction 
we will need the following proposition.

\begin{prop}\label{p:equiv to x2}
Suppose $f(x)=x^{p^n+1} + x^{(p^n+1)p^r} + x^{p^m+1} - x^{(p^m+1)p^r}$ is planar over ${\mathbb{F}}_{p^{2r}}$. If  $x^{p^m+1}$ is planar and $f(x)$ is EA-equivalent to $x^2$ over ${\mathbb{F}}_{p^{2r}}$, then $x^{p^m+1}=x^2$ and $n\equiv 0$ $($mod $r)$.
\end{prop}

\begin{proof}
Let $L_1(x)=c_0x+c_1x^p+c_2x^{p^2}+\cdot\cdot\cdot+c_{2r-1}x^{p^{2r-1}}$ and let  $L_2(x)=d_0x+d_1x^p+d_2x^{p^2}+\cdot\cdot\cdot+d_{2r-1}x^{p^{2r-1}}$ be permutation polynomials such that $L_1(x)^2=L_2(f(x))$.
Furthermore, let ${\mathbb{S}}_1$ be a commutative semifield associated with $f(x)$ and ${\mathbb{S}}_2$ be a commutative semifield associated with $f(x)|_{{\mathbb{F}}_{p^{r}}}$.  Since $f(x)$ is EA-equivalent to $x^2$ over ${\mathbb{F}}_{p^{2r}}$, the nucleus of ${\mathbb{S}}_1$ is ${\mathbb{S}}_1$. Thus, the nucleus of ${\mathbb{S}}_2$ is ${\mathbb{S}}_2$ and $f(x)|_{{\mathbb{F}}_{p^{r}}}$ is EA-equivalent to $x^2$ over ${\mathbb{F}}_{p^{r}}$.  Since $f(x)|_{{\mathbb{F}}_{p^{r}}}=2x^{p^n+1}$, $n\equiv 0$ $($mod $r)$.\\

\noindent\it Case 1. \rm Suppose $n\equiv 0$ $($mod $2r)$.
Note that
\begin{equation} (L_1(x))^2 = \sum_{j=0}^{2r-1}c_j^2x^{2p^j}
   + \sum_{0\leq j<\ell < 2r}\!2c_jc_{\ell}x^{p^j+p^{\ell}} \label{eqn:l1^2}
   \end{equation}
and \small
\begin{align*}
& L_2(f(x)) =\\
&  \sum_{j=0}^{r-1} (d_j+d_{j+r})\left(x^{2p^j}+x^{2p^{j+r}}\right) + (d_j-d_{j+r})\left(x^{(p^m+1)p^j}-x^{(p^m+1)p^{j+r}}\right).
\end{align*} \normalsize
Since $x^{p^m+1}$ is planar over ${\mathbb{F}}_{p^{2r}}$, $m \not\equiv r$ (mod $2r$). Therefore, $2c_jc_{j+r}=0$ for all $0\leq j <r$. So for every $0\leq j <r$ either $c_j=0$ or $c_{j+r}=0$. If $x^{p^m+1}\neq x^2$, then $c_j^2=(d_j+d_{j+r})=c_{j+r}^2$ for all $0\leq j <r$.  Hence, $c_j=c_{j+r}=0$ for any  $0\leq j <r$ and $L_1(x)=0$. By contradiction, $x^{p^m+1}= x^2$.\\

\noindent\it Case 2. \rm Suppose $n\equiv r$ $($mod $2r)$.
Then \small
\begin{align*}& L_2(f(x)) =\\
&  \sum_{j=0}^{r-1} 2(d_j+d_{j+r})x^{(p^r+1)p^j} + (d_j-d_{j+r})\left(x^{(p^m+1)p^j}-x^{(p^m+1)p^{j+r}}\right).
\end{align*} \normalsize
 If $x^{p^m+1}\neq x^2$, then $c_j^2=0$ for all $0\leq j <2r$ and $L_1(x)=0$. By contradiction, $x^{p^m+1}= x^2$.
\end{proof}
The following lemma will be also needed.
\begin{lm}\label{L:tedious}
Suppose $r$ and $\ell$ are integers such that $\ell|r$.  If $x$ is an element of a field, such that $x^{2\ell}=x$ and  2\,\large$\nmid$\normalsize\,$\frac{r}{\ell}$, then $x^{p^r}=x^{p^\ell}$.
\end{lm}

\begin{proof}
Since 2\,\large$\nmid$\normalsize\,$\frac{r}{\ell}$, $(2\ell)$\,\large$|$\normalsize\,$(r-\ell)$ and $x^{p^{r-\ell}}=x$. \mbox{Hence,
$x^{p^r}=\left(x^{p^{r-\ell}}\right)^{p^\ell}=x^{p^\ell}$}.
\end{proof}

\begin{prop}\label{p:middle nuc of f}
Suppose $f(x)=x^{p^n+1} + x^{(p^n+1)p^r} + x^{p^m+1} - x^{(p^m+1)p^r}$ is planar over ${\mathbb{F}}_{p^{2r}}$. Let $p^k$ be the order of the middle nucleus of ${\mathbb{S}}$, a commutative semifield associated with $f$.  If $x^{p^m+1}$ is planar over ${\mathbb{F}}_{p^{2r}}$, then gcd$(2n,m,2r)$ \large $|$ \normalsize $k$.
\end{prop}

\begin{proof}
The binary operation on the corresponding presemifield is
\begin{alignat*}{1}
 x*y 
     &= xy^{p^n}+x^{p^n}y+x^{p^r}y^{p^{n+r}}+x^{p^{n+r}}y^{p^r} \\
     &\quad+xy^{p^m}+x^{p^m}y-x^{p^r}y^{p^{m+r}}-x^{p^{m+r}}y^{p^r}.
\end{alignat*}
With $L(x)=1*x=2x+x^{p^n}+x^{p^m}+x^{p^{n+r}}-x^{p^{m+r}}$ the multiplication $\circ$ of the corresponding commutative semifield is defined by
\begin{alignat}{7}
&                        &(x*1) &\circ (1*y)\:\: &= &&x&*y \nonumber \\
&\Longleftrightarrow\:\: &L(x)  &\circ  L(y)     &= &&x&*y \nonumber \\
&\Longleftrightarrow     &x     &\circ  y        &= &&\:\:L^{-1}(x)&*L^{-1}(y)\,. \label{e:mult}
\end{alignat}
Let $t=$gcd$(2n,m,2r)$ and $a\in {\mathbb{F}}_{p^t}$. By definition, $L(a)$ is contained in the middle nucleus if and only if $(x\circ L(a))\circ y=x\circ(L(a)\circ y)$ for all $x,y\in{\mathbb{S}}$.  Since $L$ is a permutation and $\circ$ is commutative,  $L(a)$ is contained in the middle nucleus if and only if $(L(x)\circ L(a))\circ L(y)=(L(y)\circ L(a))\circ L(x)$ for all  $x,y\in{\mathbb S}$.  From Equation (\ref{e:mult}), this is equivalent to
\begin{equation}
L^{-1}(x*a)*y=L^{-1}(y*a)*x\,. \label{e:goal}
\end{equation}
Note that
\begin{alignat*}{1}
L(x)+L(x)^{p^r} 
                &= 2x+2x^{p^n}+2x^{p^r}+2x^{p^{n+r}} \\
\textnormal{and} \qquad L(x)-L(x)^{p^r} 
                &= 2x+2x^{p^m}-2x^{p^r}-2x^{p^{m+r}}.
\end{alignat*}
Thus, since $L(x)$ is additive,
\begin{alignat}{1}
x+x^{p^r} &= 2L^{-1}\!\left(x+x^{p^n}+x^{p^r}+x^{p^{n+r}}\right) \label{e:equa1} \\
\textnormal{and}\qquad x-x^{p^r} &= 2L^{-1}\!\left(x+x^{p^m}-x^{p^r}-x^{p^{m+r}}\right). \label{e:equa2}
\end{alignat}
Hence, from Equations (\ref{e:equa1}) and (\ref{e:equa2}) along with the facts that $a^{p^{2n}}=a$ and $a^{p^m}=a$ it follows that
\begin{alignat}{1}
L^{-1}(x*a) &= L^{-1}\!\left(xa^{p^n}+x^{p^n}a+x^{p^r}a^{p^{n+r}}+x^{p^{n+r}}a^{p^r}  \right. \nonumber \\
            &\quad\quad\quad\:\:\left.+xa^{p^m}+x^{p^m}a-x^{p^r}a^{p^{m+r}}-x^{p^{m+r}}a^{p^r}\right) \nonumber\\
            &= \frac{(xa^{p^n})+(xa^{p^{n}})^{p^r}}{2}+\frac{(xa)-(xa)^{p^r}}{2}. \label{eqn:15}
\end{alignat}

Since $x^{p^m+1}$ is planar over the field ${\mathbb{F}}_{p^{2r}}$, 2\,\large$\nmid$\normalsize\,$\frac{2r}{{\textnormal{gcd}}(m,2r)}$. Likewise, since $f(x)|_{{\mathbb{F}}_{p^r}}=2x^{p^n+1}$ is planar over ${\mathbb{F}}_{p^r}$ (but not necessarily planar over ${\mathbb{F}}_{p^{2r}}$), 2\,\large$\nmid$\normalsize\,$\frac{r}{{\textnormal{gcd}}(n,r)}$. In other words, the largest power of 2 dividing $2r$ also divides $m$ and the largest power of 2 dividing $r$ also divides $n$. Thus, either gcd$(n,m,2r)=$ gcd$(2n,m,2r)$ or the greatest power of 2 dividing $n$ equals the greatest power of 2 dividing $r$.  If gcd$(n,m,2r)=$ gcd$(2n,m,2r)$, then $a^{p^{{\textnormal{gcd}}(n,m,2r)}}\!=a$, and therefore, $a^{p^n}\!=a$. 
Otherwise, if the greatest power of 2 dividing $n$ equals the greatest power of 2 dividing $r$, then 2\,\large$\nmid$\normalsize\,$\frac{r}{{\textnormal{gcd}}(n,\frac{m}{2},r)}$ and, by Lemma~\ref{L:tedious}, $a^{p^n}\!=a^{p^{\textnormal{gcd}(n,m/2,r)}}\!=a^{p^r}$.
So either $a^{p^n}\!=a$ or $a^{p^n}\!=a^{p^r}$.\\

\noindent\it Case 1. \rm Suppose $a^{p^n}\!=a$.  Then from Equation~(\ref{eqn:15})
\begin{alignat*}{1}
L^{-1}(x*a) 
            &= xa
\end{alignat*}
and
\begin{equation*}
L^{-1}(x*a)*y = (xa)*y  = (ya)*x = L^{-1}(y*a)*x.
\end{equation*}

\noindent\it Case 2. \rm Suppose $a^{p^n}\!=a^{p^r}$.  Then from Equation~(\ref{eqn:15})
\begin{alignat*}{1}
L^{-1}(x*a) 
            &= \frac{x\!\left(a+a^{p^r}\right)+x^{p^r}\!\left(a-a^{p^{r}}\right)}{2}
\end{alignat*}
and
\begin{alignat}{3}
L^{-1}(x*a)*y &= \textstyle{\frac{x\left(a+a^{p^r}\right)+x^{p^r}\!\left(a-a^{p^{r}}\right)}{2}*y} && \nonumber \\
              &= \textstyle{\frac{y\left(a+a^{p^r}\right)+y^{p^r}\!\left(a-a^{p^{r}}\right)}{2}*x} &&  \label{e:case2}  \\
              &= L^{-1}(y*a)*x. &&   \nonumber
\end{alignat} \normalsize

Since Equation (\ref{e:goal}) is satisfied for any $x,y\in {\mathbb{F}}_{p^{2r}}$ and any $a\in {\mathbb{F}}_{p^t}$, the field  ${\mathbb{F}}_{p^t}$ is contained in the middle nucleus of the \mbox{commutative semifield associated with $f$}.
\end{proof}

\begin{thm}\label{T:mid nuclei}
Suppose $f(x)=x^{p^n+1} + x^{(p^n+1)p^r} + x^{p^m+1} - x^{(p^m+1)p^r}$ is planar over ${\mathbb{F}}_{p^{2r}}$.  If $x^{p^m+1}$ is planar over ${\mathbb{F}}_{p^{2r}}$, then the middle nucleus of a commutative semifield associated with $f$ has order equal to $p^{\textnormal{gcd}(2n,m,2r)}$.
\end{thm}

\begin{proof}
First note that $f(x)$ is a planar function over any subfield of ${\mathbb{F}}_{p^{2r}}$. Let $t=\textnormal{gcd}(2n,m,2r)$ and $p^k$ be the order of the middle nucleus.  By Proposition~\ref{p:middle nuc of f}, $t$\,\large$|$\normalsize\,$k$.   Assume that ${\mathbb{F}}_{p^{t}} \lneqq {\mathbb{F}}_{p^{k}} \leq {\mathbb{F}}_{p^{2r}}$. Note that since $x^{p^m+1}$ is planar over ${\mathbb{F}}_{p^{2r}}$, $2$\,\large$|$\normalsize\,$m$. Thus, $2$\,\large$|$\normalsize\,$t$, and therefore, $k=2\ell$ for some integer $\ell$, namely, $\ell={\textnormal{gcd}}(n,\frac{m}{2},r)$.

Since \mbox{$f(x)|_{{\mathbb{F}}_{p^r}}=2x^{p^n+1}$} is planar over ${\mathbb{F}}_{p^r}$, 2\,\large$\nmid$\normalsize\,$\frac{r}{{\textnormal{gcd}}(n,r)}$, and therefore, 2\,\large$\nmid$\normalsize\,$\frac{r}{\ell}$.  By Lemma~\ref{L:tedious}, if $b\in {\mathbb{F}}_{p^{2\ell}}$, then $b^r=b^\ell$.
Thus,
\begin{alignat*}{1}
f(x)|_{{\mathbb{F}}_{p^{2\ell}}} &= x^{p^n+1} + x^{(p^n+1)p^r} + x^{p^m+1} - x^{(p^m+1)p^r}  \\
                                 &= x^{p^n+1} + x^{(p^n+1)p^\ell} + x^{p^m+1} - x^{(p^m+1)p^\ell}\,.
\end{alignat*}

Since $f(x)$ is planar over ${\mathbb{F}}_{p^{2\ell}}$, the middle nucleus of the commutative semifield associated with $f(x)|_{{\mathbb{F}}_{p^{2\ell}}}$ is ${\mathbb{F}}_{p^k}={\mathbb{F}}_{p^{2\ell}}$.  By Lemma~\ref{r:equiv to x2}, $f(x)$ is EA-equivalent to $x^2$ over ${\mathbb{F}}_{p^{k}}$.
But, by Proposition~\ref{p:equiv to x2}, this is not possible unless $x^{p^m+1}=x^2$ over ${\mathbb{F}}_{p^{2\ell}}$ and $n\equiv 0$ $($mod $\ell)$.  Hence, $(2\ell)$\,\large$|$\normalsize\,$m$ and $\ell$\,\large$|$\normalsize\,$n$. In other words, $k$\,\large$|$\normalsize\,$m$ and $k$\,\large$|$\normalsize\,$(2n)$ \mbox{contradicting the assumption that ${\mathbb{F}}_{p^{t}} \lneqq {\mathbb{F}}_{p^{k}}$.}
\end{proof}


\subsection{Left Nuclei}
The following proposition can be used to calculate the left nuclei of some semifields construed using the projection construction


\begin{lm}\label{L:inverse} Suppose $f(x)=x^{p^n+1}+x^{(p^n+1)p^{3n}}+x^{p^{2n}+1}-x^{(p^{2n}+1)p^{3n}}$ is planar over ${\mathbb{F}}_{p^{6n}}$.  If $*$ is the binary operation of the corresponding presemifield and $L(x)=x*1=2x+x^{p^n}+x^{p^{2n}}+x^{p^{4n}}-x^{p^{5n}}$, then
\[ L^{-1}(x)={\textstyle{\frac{1}{4}}}x-{\textstyle{\frac{1}{4}}}x^{p^n}+{\textstyle{\frac{1}{4}}}x^{p^{5n}}. \]
\end{lm}

\begin{proof}
Let $F(x)=\frac{1}{4}x-\frac{1}{4}x^{p^n}+\frac{1}{4}x^{p^{5n}}\in {\mathbb{F}}_{p^{6n}}[x]$.  Then
\begin{alignat*}{9}
F(L(x))&= & &{\textstyle{\frac{1}{2}}}&x &+{\textstyle{\frac{1}{4}}}x^{p^n} &+{\textstyle{\frac{1}{4}}}x^{p^{2n}} &      &+{\textstyle{\frac{1}{4}}}x^{p^{4n}} &-{\textstyle{\frac{1}{4}}}x^{p^{5n}}\\
       &  &+&{\textstyle{\frac{1}{4}}}&x &-{\textstyle{\frac{1}{2}}}x^{p^n} &-{\textstyle{\frac{1}{4}}}x^{p^{2n}} &-{\textstyle{\frac{1}{4}}}x^{p^{3n}} &      &-{\textstyle{\frac{1}{4}}}x^{p^{5n}}\\
       &  &+&{\textstyle{\frac{1}{4}}}&x &+{\textstyle{\frac{1}{4}}}x^{p^n} &      &+{\textstyle{\frac{1}{4}}}x^{p^{3n}} &-{\textstyle{\frac{1}{4}}}x^{p^{4n}} &+{\textstyle{\frac{1}{2}}}x^{p^{5n}},\\
       &= &&&x&\,.&&&&
\end{alignat*}
\end{proof}
\begin{thm}\label{t:left}
Suppose $f(x)=x^{p^n+1}+x^{(p^n+1)p^{3n}}+x^{p^{2n}+1}-x^{(p^{2n}+1)p^{3n}}$ is planar over ${\mathbb{F}}_{p^{6n}}$. If ${\mathbb{S}}$ is its corresponding semifield, then   the left nucleus of ${\mathbb{S}}$ is equal to the middle nucleus and has order of $p^{2n}$.
\end{thm}

\begin{proof}
By Theorem~\ref{T:mid nuclei}, the middle nucleus of ${\mathbb{S}}$ is ${\mathbb{F}}_{p^{2n}}$.  Since the left nucleus of ${\mathbb{S}}$ is a subfield of the middle nucleus, it is enough to show that any element of ${\mathbb{F}}_{p^{2n}}$ is contained in the left nucleus. Let $a\in {\mathbb{F}}_{p^{2n}}$. Since $a^{p^n}=a^{p^{3n}}$, we are in case 2 of the proof of Proposition~\ref{p:middle nuc of f}.  Thus, by using Equation (\ref{e:case2}) with $m=2n$, $r=3n$ and $a^{p^{3n}}=a^{p^n}$ \small
\begin{alignat}{3}
L^{-1}(x*a)*y
      &=\quad \textstyle{\frac{x^{p^n}\!\!\left(a+a^{p^{3n}}\!\right)+x^{p^{4n}}\!\!\left(a^{p^{{3n}}}-a\right)}{2}y} \!\!\!
      &&+  \textstyle{\frac{x\left(a+a^{p^{3n}}\!\right)+x^{p^{3n}}\!\!\left(a-a^{p^{{3n}}}\!\right)}{2}y^{p^n}} \nonumber \\
      &\quad+  \textstyle{\frac{x^{p^{4n}}\!\!\left(a+a^{p^{3n}}\!\right)+x^{p^n}\!\!\left(a-a^{p^{{3n}}}\!\right)}{2}y^{p^{3n}}} \!\!\!
      &&+  \textstyle{\frac{x^{p^{3n}}\!\!\left(a+a^{p^{3n}}\!\right)+x\left(a^{p^{{3n}}}\!\!\!-a\right)}{2}y^{p^{4n}}} \nonumber \\
      &\quad+  \textstyle{\frac{x^{p^{2n}}\!\!\left(a+a^{p^{3n}}\!\right)+x^{p^{5n}}\!\!\left(a-a^{p^{{3n}}}\!\right)}{2}y} \!\!
      &&+  \textstyle{\frac{x\left(a+a^{p^{3n}}\!\right)+x^{p^{3n}}\!\!\left(a-a^{p^{{3n}}}\!\right)}{2}y^{p^{2n}}} \nonumber \\
      &\quad-  \textstyle{\frac{x^{p^{5n}}\!\!\left(a+a^{p^{3n}}\!\right)+x^{p^{2n}}\!\!\left(a^{p^{{3n}}}\!\!\!-a\right)}{2}y^{p^{3n}}} \!\!\!
      &&-  \textstyle{\frac{x^{p^{3n}}\!\!\left(a+a^{p^{3n}}\!\right)+x\left(a^{p^{{3n}}}\!\!\!-a\right)}{2}y^{p^{5n}}} \nonumber \\
      &=\quad \textstyle{\frac{x^{p^n}\!\!\left(a+a^{p^{n}}\!\right)+x^{p^{4n}}\!\!\left(a^{p^{{n}}}-a\right)}{2}y} \!\!\!
      &&+  \textstyle{\frac{x^{p^{2n}}\!\!\left(a+a^{p^{n}}\!\right)+x^{p^{5n}}\!\!\left(a-a^{p^{{n}}}\!\right)}{2}y} \nonumber \\
      &\quad+  \textstyle{\frac{x\left(a+a^{p^{n}}\!\right)+x^{p^{3n}}\!\!\left(a-a^{p^{{n}}}\!\right)}{2}y^{p^n}} \!\!\!
      &&+  \textstyle{\frac{x\!\left(a+a^{p^{n}}\!\right)+x^{p^{3n}}\!\!\left(a-a^{p^{{n}}}\!\right)}{2}y^{p^{2n}}} \nonumber \\
      &\quad+  \textstyle{\frac{x^{p^{4n}}\!\!\left(a+a^{p^{n}}\!\right)+x^{p^n}\!\!\left(a-a^{p^{{n}}}\!\right)}{2}y^{p^{3n}}} \!\!\!
      &&+  \textstyle{\frac{x^{p^{5n}}\!\!\left(-a-a^{p^{n}}\!\right)+x^{p^{2n}}\!\!\left(a-a^{p^{{n}}}\!\right)}{2}y^{p^{3n}}} \nonumber \\
      &\quad+  \textstyle{\frac{x^{p^{3n}}\!\!\left(a+a^{p^{n}}\!\right)+x\left(a^{p^{{n}}}\!\!\!-a\right)}{2}y^{p^{4n}}} \!\!\!
      &&+  \textstyle{\frac{x^{p^{3n}}\!\!\left(-a-a^{p^{n}}\!\right)+x\left(a-a^{p^{{n}}}\!\right)}{2}y^{p^{5n}}} \label{e:xay}
\end{alignat} \normalsize
for any $x,y\in{\mathbb{F}}_{p^{6n}}$.

By Lemma~\ref{L:inverse}, $L^{-1}(x)={\textstyle{\frac{1}{4}}}x-{\textstyle{\frac{1}{4}}}x^{p^n}+{\textstyle{\frac{1}{4}}}x^{p^{5n}}$.  Therefore, since $$\textstyle{x*y=xy^{p^n}\!\!+\!x^{p^n}y\!+\!x^{p^{3n}}y^{p^{4n}}\!\!+\!x^{p^{4n}}y^{p^{3n}}\!\!+\!xy^{p^{2n}}\!\!+\!x^{p^{2n}}y\!-\!x^{p^{3n}}y^{p^{5n}}\!\!-\!x^{p^{5n}}y^{p^{3n}}}\!\!,$$
\begin{alignat}{3}
L^{-1}(x*y)
      =\:\:&\textstyle{\left(\!x^{p^n}\!\!+\!x^{p^{2n}}\!\!+\!x^{p^{4n}}\!\!+\!x^{p^{5n}}\!\right)\frac{y}{4}} \,
      &&+  \!\textstyle{\left(\!x\!-\!x^{p^{2n}}\!\!-\!x^{p^{3n}}\!\!+\!x^{p^{5n}}\!\right)\frac{y^{p^n\!\!\!}}{\!\!\!4}} \nonumber \\
      +&\textstyle{\left(\!x\!-\!x^{p^{n}}\!\!+\!x^{p^{3n}}\!\!-\!x^{p^{4n}}\!\right)\frac{y^{p^{2n\!\!\!\!\!}}}{\!\!\!\!4}} \!\!\!
      &&+  \!\textstyle{\left(\!-x^{p^n}\!\!+\!x^{p^{2n}}\!\!+\!x^{p^{4n}}\!\!-\!x^{p^{5n}}\!\right)\frac{y^{p^{3n\!\!\!\!\!}}}{\!\!\!\!4}} \nonumber \\
      +&\textstyle{\left(\!x\!-\!x^{p^{2n}}\!\!+\!x^{p^{3n}}\!\!-\!x^{p^{5n}}\!\right)\frac{y^{p^{4n\!\!\!\!\!}}}{\!\!\!\!4}} \,
      &&+  \!\textstyle{\left(\!x\!+\!x^{p^{n}}\!\!-\!x^{p^{3n}}\!\!-\!x^{p^{4n}}\!\right)\frac{y^{p^{5n\!\!\!\!\!}}}{\!\!\!\!4}}\,. \label{e:Lxy}
\end{alignat}
Note that for any $z\in {\mathbb{F}}_{p^{6n}}$,
\begin{alignat}{1}
z*a&=\textstyle{za^{p^n}\!\!+\!z^{p^n}a\!+\!zz^{p^{3n}}a^{p^{4n}}\!\!+\!z^{p^{4n}}a^{p^{3n}}\!\!+\!za^{p^{2n}}\!\!+\!z^{p^{2n}}a\!-\!z^{p^{3n}}a^{p^{5n}}\!\!-\!z^{p^{5n}}a^{p^{3n}}} \nonumber \\
   &=\left(\!z\!+\!z^{p^n}\!\!+\!z^{p^{2n}}\!\!+\!z^{p^{3n}}\!\right)a +  \left(\!z\!-\!z^{p^{3n}}\!\!+\!z^{p^{4n}}\!\!-\!z^{p^{5n}}\!\right)a^{p^n}\!. \label{e:za}
\end{alignat}
By using Equality (\ref{e:Lxy}) and letting $z=L^{-1}(x*y)$,
\begin{alignat}{3}
z\!+\!z^{p^n}\!\!&+\!z^{p^{2n}}\!\!+\!z^{p^{3n}} = && \nonumber \\
      =\:\:&\textstyle{\left(\!2x^{p^n}\!\!+\!2x^{p^{2n}}\!\!-\!2x^{p^{4n}}\!\!+\!2x^{p^{5n}}\!\right)\frac{y}{4}} \,
      &&+  \!\textstyle{\left(\!2x\!+\!2x^{p^{3n}}\!\right)\frac{y^{p^n\!\!\!}}{\!\!\!4}} \nonumber \\
      +&\textstyle{\left(\!2x\!+\!x^{p^{3n}}\!\right)\frac{y^{p^{2n\!\!\!\!\!}}}{\!\!\!\!4}} \!\!\!
      &&+  \!\textstyle{\left(\!2x^{p^n}\!\!+\!2x^{p^{2n}}\!\!+\!2x^{p^{4n}}\!\!-\!2x^{p^{5n}}\!\right)\frac{y^{p^{3n\!\!\!\!\!}}}{\!\!\!\!4}} \nonumber \\
      +&\textstyle{\left(\!-2x\!+\!2x^{p^{3n}}\!\right)\frac{y^{p^{4n\!\!\!\!\!}}}{\!\!\!\!4}} \,
      &&+  \!\textstyle{\left(\!2x\!-\!2x^{p^{3n}}\!\right)\frac{y^{p^{5n\!\!\!\!\!}}}{\!\!\!\!4}} \label{e:zzzz}
\end{alignat}
and
\begin{alignat}{3}
z\!-\!z^{p^n}\!\!&+\!z^{p^{2n}}\!\!-\!z^{p^{3n}} =&& \nonumber \\
      =\:\:&\textstyle{\left(\!2x^{p^n}\!\!+\!2x^{p^{2n}}\!\!+\!2x^{p^{4n}}\!\!-\!2x^{p^{5n}}\!\right)\frac{y}{4}} \,
      &&+  \!\textstyle{\left(\!2x\!-\!2x^{p^{3n}}\!\right)\frac{y^{p^n\!\!\!}}{\!\!\!4}} \nonumber \\
      +&\textstyle{\left(\!2x\!-\!x^{p^{3n}}\!\right)\frac{y^{p^{2n\!\!\!\!\!}}}{\!\!\!\!4}} \!\!\!
      &&+  \!\textstyle{\left(\!-2x^{p^n}\!\!-\!2x^{p^{2n}}\!\!+\!2x^{p^{4n}}\!\!-\!2x^{p^{5n}}\!\right)\frac{y^{p^{3n\!\!\!\!\!}}}{\!\!\!\!4}} \nonumber \\
      +&\textstyle{\left(\!2x\!+\!2x^{p^{3n}}\!\right)\frac{y^{p^{4n\!\!\!\!\!}}}{\!\!\!\!4}} \,
      &&+  \!\textstyle{\left(\!-2x\!-\!2x^{p^{3n}}\!\right)\frac{y^{p^{5n\!\!\!\!\!}}}{\!\!\!\!4}}\,. \label{e:zzzz2}
\end{alignat}
Hence, from Equalities (\ref{e:za}), (\ref{e:zzzz}) and (\ref{e:zzzz2}), it follows that
\small
\begin{alignat}{3}
L^{-1}(x*y)*a
      &=\:\: \textstyle{\frac{x^{p^n}\!a+x^{p^{2n}}\!a-x^{p^{4n}}\!a+x^{p^{5n}}\!a}{2}y} \!\!\!
      &&+  \textstyle{\frac{x^{p^{n}}\!a^{p^{n}}\!+x^{p^{2n}}\!a^{p^{n}}\!+x^{p^{4n}}\!a^{p^n}\!-x^{p^{5n}}\!a^{p^{n}}}{2}y} \nonumber \\
      &\:\:+  \textstyle{\frac{xa+x^{p^{3n}}\!a+xa^{p^{n}}\!-x^{p^{3n}}\!a^{p^{{n}}}}{2}y^{p^n}} \!\!\!
      &&+  \textstyle{\frac{xa+x^{p^{3n}}\!a+xa^{p^{n}}\!-x^{p^{3n}}\!a^{p^{{n}}}}{2}y^{p^{2n}}} \nonumber \\
      &\:\:+  \textstyle{\frac{x^{p^{n}}\!a+x^{p^{2n}}\!a+x^{p^{4n}}\!a-x^{p^{5n}}\!a}{2}y^{p^{3n}}} \!\!\!
      &&+  \textstyle{\frac{-x^{p^{n}}\!a^{p^n}\!\!-x^{p^{2n}}\!a^{p^{n}}\!\!+x^{p^{4n}}\!a^{p^n}\!\!-x^{p^{5n}}\!a^{p^{{n}}}\!}{2}y^{p^{3n}}} \nonumber \\
      &\:\:+  \textstyle{\frac{-xa+x^{p^{3n}}\!a+xa^{p^{n}}\!+x^{p^{3n}}\!a^{p^{{n}}}}{2}y^{p^{4n}}} \!\!\!
      &&+  \textstyle{\frac{xa-x^{p^{3n}}\!a-xa^{p^{n}}\!-x^{p^{3n}}\!a^{p^{{n}}}}{2}y^{p^{5n}}}. \label{e:xya}
\end{alignat}
From Equations (\ref{e:xay}) and  (\ref{e:xya}), $L^{-1}(x*a)*y=L^{-1}(x*y)*a$ for any $x,y\in {\mathbb{F}}_{p^{6n}}$. Hence, $a$ is contained in the left nucleus of ${\mathbb{S}}$.
\end{proof}

\section{Additional semifields \label{sec:new}}


We now present several  planar polynomials obtained from \mbox{Equation~(\ref{eqn:newplanar})}.  By examining their corresponding semifields, we show that they are new.
With the aid of an algebra package such as GAP \cite{gap} it is possible to check if a function is planar.  Additionally, the following lemma 
reduces the amount of computations required.

\begin{lm}\label{lm:reduce}
Let $g(x)=x^{p^i+p^j}$ and $h(x)=x^{p^s+p^t}$ be DO monomials over ${\mathbb{F}}_{p^{2r}}$ and let $f(x)$ be defined as in \mbox{Equation (\ref{eqn:newplanar})}. Then for any $c\in {\mathbb{F}}_{p^r}^*$, $D_f(cx,ca)=0$ if and only if $D_f(x,a)=0$.
\end{lm}

\begin{proof}
For any $c\in {\mathbb{F}}_{p^r}^*$, \footnotesize
\begin{alignat*}{3}
 & & D_f(x,a)&=0 \\
 &\Longleftrightarrow & D_f(x,a)+D_f(x,a)^{p^r}=0 \:\textnormal{ and }\: D_f(x,a)-D_f(x,a)^{p^r} &=0 \\
 &\Longleftrightarrow & 2x^{p^i}a^{p^j} + 2x^{p^j}a^{p^i}+2x^{p^{i+r}}a^{p^{j+r}} + 2x^{p^{j+r}}a^{p^{i+r}}&=0 \\
 && \textnormal{ and }\: 2x^{p^s}a^{p^t} + 2x^{p^t}a^{p^s}-2x^{p^{s+r}}a^{p^{t+r}} - 2x^{p^{t+r}}a^{p^{s+r}}&=0 \\
 &\Longleftrightarrow & \frac{1}{c^{p^{i}+p^{j}}}\left(2(cx)^{p^i}\!\!(ca)^{p^j} \!\!+ 2(cx)^{p^j}\!\!(ca)^{p^i} \!\!+ 2(cx)^{p^{i+r}}\!\!(ca)^{p^{j+r}} \!\!+ 2(cx)^{p^{j+r}}\!\!(ca)^{p^{i+r}}\right)&=0 \\
 && \textnormal{and }\: \frac{1}{c^{p^{s}+p^{t}}}\left(2(cx)^{p^s}\!\!(ca)^{p^t} \!\!+ 2(cx)^{p^t}\!\!(ca)^{p^s} \!\!-2(cx)^{p^{s+r}}\!\!(ca)^{p^{t+r}} \!\!- 2(cx)^{p^{t+r}}\!\!(ca)^{p^{s+r}}\right)&=0 \\
 &\Longleftrightarrow & D_f(cx,ca)+D_f(cx,ca)^{p^r}=0 \:\textnormal{ and }\: D_f(cx,ca)-D_f(cx,ca)^{p^r} &=0 \\
 &\Longleftrightarrow & D_f(cx,ca)&=0.
\end{alignat*} \normalsize
\end{proof}
From Lemma~\ref{lm:reduce}, one only has to check one value of $x$ for each of the $p^r+1$ cosets of ${\mathbb{F}}_{p^r}^*$ in ${\mathbb{F}}_{p^{2r}}^*$. Let $\alpha$ be a primitive element of ${\mathbb{F}}_{p^{2r}}$, then by using Lemma~\ref{lm:reduce} it can be determined  if \[ f(x)=x^{p^i+p^j} + x^{(p^i+p^j)p^r} + x^{p^s+p^t} - x^{(p^s+p^t)p^r} \]
is planar over ${\mathbb{F}}_{p^{2r}}$ just by checking
if $\alpha^k$ is a root of $D_f(x,a)\in {\mathbb{F}}_{p^{2r}}[x]$ for $0\leq k\leq p^r$ (or $1\leq k\leq p^r+1$) and $a\in {\mathbb{F}}_{p^{2r}}^*$.   Thus, using  Lemma~\ref{lm:reduce} along with GAP \cite{gap}, one can easily verify that the following are examples of planar functions of the form of \mbox{Equation (\ref{eqn:newplanar})}.

\begin{ex}\label{e:exam}
The following are examples of planar functions of the form
\[  x^{p^i+1} + x^{(p^i+1)p^r} + x^{p^s+1} - x^{(p^s+1)p^r} \in {\mathbb{F}}_{p^{2r}}[x]. \]
\begin{enumerate}
\item $x^{6} + x^{(6)5^3} + x^{26} - x^{(26)5^3} \in {\mathbb{F}}_{5^{6}}[x]$
\item $x^{8} + x^{(8)7^3} + x^{50} - x^{(50)7^3} \in {\mathbb{F}}_{7^{6}}[x]$
\item $x^{10} + x^{(10)3^6} + x^{82} - x^{(82)3^6} \in {\mathbb{F}}_{3^{12}}[x]$
\item $x^{10} + x^{(10)3^6} + x^{28} - x^{(28)3^6} \in {\mathbb{F}}_{3^{12}}[x]$
\end{enumerate}
\end{ex}

Note that in the first three functions in Example~\ref{e:exam}, the function is of the form of \mbox{Equation (\ref{eqn:newplanar})} where $h(x)$ is a planar monomial.  Whereas, $h(x)$ is not planar in the fourth function in Example~\ref{e:exam}.  Further note that the first three functions are of the form $f(x)=x^{p^n+1}+x^{(p^n+1)p^{3n}}+x^{p^{2n}+1}-x^{(p^{2n}+1)p^{3n}}\in {\mathbb{F}}_{p^{2r}}[x]$ where $r=3n$ in which case Theorem  \ref{T:mid nuclei} may be used to determine the sizes of the middle nuclei of the corresponding semifields.

\begin{cor}\label{c:middlenuc}
The orders of the middle nuclei of the semifields corresponding to the first three functions in Example~\ref{e:exam} are 25, 49 and 81 respectively.
\end{cor}




The nuclei of semifield corresponding to the  fourth function of Example \ref{e:exam} may also be calculated.
\begin{thm}\label{lem:9}
If \,${\mathbb{S}}$ is a corresponding semifield of the planar function $f(x)=x^{10} + x^{(10)3^6} + x^{28} - x^{(28)3^6}\in {\mathbb{F}}_{3^{12}}[x]$, then  the middle nucleus of ${\mathbb{S}}$ is ${\mathbb{F}}_{9}$ and the left nucleus of ${\mathbb{S}}$ is ${\mathbb{F}}_{3}$.
\end{thm}

\begin{proof}
It is easy to check, by direct computation using GAP \cite{gap}, that neither ${\mathbb{F}}_{27}$ nor ${\mathbb{F}}_{81}$ lies in the middle nucleus of ${\mathbb{S}}$. Furthermore, by using GAP and letting $a$ be a generator of ${\mathbb{F}}_{9}$, it is easy to show that $a$, and therefore the field  ${\mathbb{F}}_{9}$, lies in the middle nucleus of ${\mathbb{S}}$.

Since the left nucleus of ${\mathbb{S}}$ is a subfield of the middle nucleus, the left nucleus of ${\mathbb{S}}$ is either ${\mathbb{F}}_{3}$ or ${\mathbb{F}}_{9}$. By direct computation, ${\mathbb{F}}_{9}$ does not lie in the left nucleus of ${\mathbb{S}}$.  Hence, the left nucleus of ${\mathbb{S}}$ is ${\mathbb{F}}_{3}$.
\end{proof}


By Corollary~\ref{c:middlenuc} and  Lemma~\ref{lem:9} along with \cite[\S 3]{Marino} and \cite{ZhouPott2013}, the semifields associated with the functions of Example~\ref{e:exam} are either new or otherwise isotopic to either the Budaghyan-Helleseth $BH(p,r,s,\beta)$ or the Zhou-Pott ${\mathbb{S}}_{k,\sigma}$ semifields.
We now proceed to show that the semifields corresponding to the 
functions in Example~\ref{e:exam} are not  isotopic to either the Budaghyan-Helleseth  or the Zhou-Pott semifields.  

\begin{thm}
The semifields corresponding to the first three functions in Example~\ref{e:exam} are not isotopic to either the Budaghyan-Helleseth $BH(p,r,s,\beta)$ or the Zhou-Pott  ${\mathbb{S}}_{k,\sigma}$ semifields and are therefore not isotopic to any previously known semifields.
\end{thm}

\begin{proof}
It is known from \cite[Thm 4.1]{Marino} that the Budaghyan-Helleseth presemifields $BH(p,r,s,\beta)$ have middle nuclei of order $p^{2\textnormal{gcd}(r,s)}$ and left nuclei of order $p^{\textnormal{gcd}(r,s)}$. Since their left nuclei are proper subfields of their middle nuclei, by Theorem~\ref{t:left}, the semifields of the first three functions in Example~\ref{e:exam} are not isotopic to the Budaghyan-Helleseth semifields $BH(p,r,s,\beta)$.

By Theorem 2 of \cite{ZhouPott2013}, the order of the middle nucleus of a Zhou-Pott semifield ${\mathbb{S}}_{k,\sigma}$ is $p^{{2\textnormal{gcd}}(r,k)}$ when $\sigma=1$ and $p^{{\textnormal{gcd}}(r,k)}$ when $\sigma\neq1$.  But by Theorem~\ref{T:mid nuclei},  the semifields of the first three functions in Example~\ref{e:exam} have middle nuclei of order $p^{\textnormal{gcd}(2n,m,2r)}=p^{\textnormal{gcd}(2n,2n,6n)}=p^{2n}$.  Since 2\,\large$\nmid$\normalsize\,$\frac{r}{\textnormal{gcd}(n,r)}$, the largest power of 2 dividing $2r$ divides $2n$ but does not divide ${\textnormal{gcd}}(r,k)$.  Therefore, the semifields of the first three functions in Example~\ref{e:exam} are not isotopic to a Zhou-Pott semifield ${\mathbb{S}}_{k,\sigma}$ unless $\sigma=1$.  But if $\sigma=1$, then, by Theorem~3 of \cite{ZhouPott2013}, the order of the left nucleus of the Zhou-Pott semifield ${\mathbb{S}}_{k,\sigma}$ is $p^{\textnormal{gcd}(r,k,0)}=p^{\textnormal{gcd}(r,k)}$.  Hence, if $\sigma=1$, then the left nucleus of ${\mathbb{S}}_{k,\sigma}$ is a proper subfield of the middle nucleus.   Therefore, the semifields of the first three functions in Example~\ref{e:exam} are not isotopic to the Zhou-Pott \mbox{semifields ${\mathbb{S}}_{k,\sigma}$}.
\end{proof}





\begin{thm}
The planar function $f(x)=x^{10} + x^{(10)3^6} + x^{28} - x^{(28)3^6}\in {\mathbb{F}}_{3^{12}}[x]$ is not EA-equivalent to any previously known planar functions and its corresponding semifield is not 
 isotopic to any previously known semifields.
\end{thm}

\begin{proof}
Suppose $f(x)$ is EA-equivalent to a planar function, $\varphi(x)$, associated with a Zhou-Pott presemifield ${\mathbb{P}}_{k,\sigma}=({\mathbb{F}}_{p^{2r}},+,*)$ where $p=3$ and $r=6$.  Let $\omega \in {\mathbb{F}}_{p^{2r}}^*$ such that $\omega^{p^r}=-\omega$ and denote any $x\in {\mathbb{F}}_{p^{2r}}$ by $x=x_1+x_2\omega$ for $x_1,x_2\in {\mathbb{F}}_{p^{r}}$.
By \cite[Thm 1]{ZhouPott2013},
\begin{alignat*}{1}
\varphi(x) &= x*x = (x_1+x_2\omega)*(x_1+x_2\omega), \\
           &= 2x_1^{p^k+1}+\alpha\left(2x_2^{p^k+1}\right)^{\sigma}+2x_1x_2\omega, \\
           &= 2x_1^{p^k+1}+2\alpha\left(x_2^{p^k+1}\right)^{\sigma}+\textstyle{\frac{1}{2}}\left[(x_1+x_2\omega)^2-(x_1+x_2\omega)^{2p^r}\right], \\
           &= 2x_1^{p^k+1}+2\alpha\left(x_2^{p^k+1}\right)^{\sigma}+\textstyle{\frac{1}{2}}x^2-\textstyle{\frac{1}{2}}x^{2p^r},
\end{alignat*}
and therefore, $\varphi(x)-\varphi(x)^{p^r}=x^2-x^{2p^r}$.

Now let $L_1(x)=c_0x+c_1x^p+c_2x^{p^2}+\cdot\cdot\cdot+c_{2r-1}x^{p^{2r-1}}\!$ and $L_2(x)$ be linear permutation polynomials such that $\varphi(L_1(x))=L_2(f(x))$.  Note that \small
\begin{alignat*}{1}
\varphi(L_1(x))&-\varphi(L_1(x))^{p^r} = L_1(x)^2-L_1(x)^{2p^r} \\
          &= \sum_{0\leq j< r}(c_j^2-c_{j+r}^{2p^r})x^{2p^j} + (c_{j+r}^2-c_{j}^{2p^r})x^{2p^{j+r}} \\
          &\quad+ \sum_{0\leq j<\ell < r}\!(2c_jc_{\ell}-2c_{j+r}^{p^r}c_{\ell+r}^{p^r})x^{p^j+p^{\ell}} + (2c_{j+r}c_{\ell+r}-2c_{j}^{p^r}c_{\ell}^{p^r})x^{p^{j+r}+p^{\ell+r}} \\
          &\quad+ \sum_{\begin{array}{c}0\leq j< r\\r\leq \ell<2r\end{array}}\!(2c_jc_{\ell}-2c_{j+r}^{p^r}c_{\ell-r}^{p^r})x^{p^j+p^{\ell}}.
\end{alignat*} \normalsize
Since $\varphi(L_1(x))-\varphi(L_1(x))^{p^r}=L_2(f(x))-L_2(f(x))^{p^r}$, $c_j^2-c_{j+r}^{2p^r}=0$ and $2c_jc_{j+1}-2c_{j+r}^{p^r}c_{j+1+r}^{p^r}=0$ for any $0\leq j< r$.  Thus, $c_{j+r}^{p^r}=\pm c_j$ and $c_{j+r}^{p^r}c_{j+1+r}^{p^r}=c_jc_{j+1}$ for all $0\leq j< r$.  So if $c_{j+r}^{p^r}= c_j$, then $c_{j+1+r}^{p^r}= c_{j+1}$ and, likewise, if $c_{j+r}^{p^r}= -c_j$, then $c_{j+1+r}^{p^r}= -c_{j+1}$. Hence, there exists an $a\in \{\pm 1\}$ such that $c_{j+r}^{p^r}=a c_j$ for all $0\leq j< r$.  Therefore, $L_1(x)^2-L_1(x)^{2p^r}=0$ meaning $L_1(x)^2\in {\mathbb{F}}_{p^r}$ for any $x\in {\mathbb{F}}_{p^{2r}}$.  This contradicts the fact that $L_1(x)\in {\mathbb{F}}_{p^{2r}}[x]$ is a permutation polynomial. Hence, $f(x)$ is not EA-equivalent to a planar function, $\varphi(x)$, corresponding to a Zhou-Pott presemifield.

Now assume that $f(x)$ is EA-equivalent to the planar function, $$B(x)=x^{p^r+1}+\omega\beta x^{p^s+1}+\omega\beta^{p^r}x^{(p^s+1)p^r},$$  where $p=3$ and $r=6$.
Let $L_1(x)=c_0x+c_1x^p+c_2x^{p^2}+\cdot\cdot\cdot+c_{2r-1}x^{p^{2r-1}}$ and $L_2(x)$ be linear permutation polynomials such that $B(L_1(x))=L_2(f(x))$.  Since $B(x)+B(x)^{p^r}=2x^{p^r+1}$, \small
\begin{alignat*}{1}
B(L_1(x))&+B(L_1(x))^{p^r} = 2L_1(x)^{p^r+1} \\
          &= 2\sum_{0\leq j< r}\left(c_jc_{j+r}x^{2p^j} + c_{j+r}c_{j}^{p^r}\right)x^{2p^{j+r}} \\
          &\quad+ 2\sum_{0\leq j<\ell < r}\!(c_jc_{\ell+r}^{p^r}+c_{j+r}^{p^r}c_{\ell})x^{p^j+p^{\ell}} + (c_{j+r}c_{\ell}^{p^r}+c_{j}^{p^r}c_{\ell+r})x^{p^{j+r}+p^{\ell+r}} \\
          &\quad+ 2\sum_{\begin{array}{c}0\leq j< r\\r\leq \ell<2r\end{array}}\!(c_jc_{\ell-r}^{p^r}+c_{j+r}^{p^r}c_{\ell})x^{p^j+p^{\ell}}.
\end{alignat*} \normalsize
Since $B(L_1(x))+B(L_1(x))^{p^r}=L_2(f(x))+L_2(f(x))^{p^r}$, the coefficient of $x^{2p^j}$, namely $c_jc_{j+r}^{p^r}$, is zero for any $0\leq j< r$. Likewise, the coefficient of $x^{p^j+p^{j+r}}$, namely $c_j^{p^r+1}+c_{j+r}^{p^r+1}$, is zero for all $0\leq j< r$.  Thus, $c_j=0=c_{j+r}$ for all $0\leq j< r$, meaning $L_1(x)=0$.  This contradicts the fact that $L_1(x)$ is a permutation polynomial. Hence, $f(x)$ is not EA-equivalent to the generalized Budaghyan-Helleseth planar function.
\end{proof}

\section*{Acknowledgements}
Thanks to the anonymous referees who's suggestions have been valuable in improving this paper.





\end{document}